\documentclass[a4paper,12pt]{article}
\usepackage[T2A]{fontenc}
\usepackage[utf8]{inputenc} 
\usepackage[english]{babel}
\usepackage{amsmath,amsthm,amssymb}
\usepackage{xcolor}
\usepackage{hyperref}
\pdfcompresslevel=9

\usepackage{currfile}
\usepackage{navigator}
\IfFileExists{\currfilename}{\embeddedfile{sourcefile}{\currfilename}}{}

\newtheorem{theorem}{Theorem}
\newtheorem{lemma}{Lemma}
\newtheorem{proposition}{Proposition}
\newtheorem{corollary}{Corollary}
\theoremstyle{remark}
\newtheorem{remark}{\bf Remark}

\makeatletter
\def\@seccntformat#1{\csname the#1\endcsname.\ } 
\def\@biblabel#1{#1.} 
\makeatother

\newcommand{\param}[4]{
\left(\begin{array}{cc} #1&#2\\#3&#4\end{array}\right)
}

\newcommand\vc[1]{\bar{#1}} 
\newcommand\GF[1]{\mathbb{F}_{#1}} 

\newcommand\Quot[1]{\mbox{$\begin{pmatrix}#1\end{pmatrix}$}}

\title{Low-degree
functions 
without non-essential arguments%
\thanks{%
The study was funded by the Russian Science Foundation, grant 22-11-00266, \url{https://rscf.ru/en/project/22-11-00266/}.
}
}
\author{Denis S. Krotov%
\thanks{D.S.K. is with the Sobolev Institute of mathematics,
Novosibirsk 630090, Russia. E-mail: dk@ieee.org}
}
\date{}

\begin{document}

\maketitle
\begin{abstract}
For the Hamming graph $H(n,q)$,
where a $q$ is a constant prime power and $n$ grows,
we construct perfect colorings
without non-essential arguments
such that $n$ depends exponentially
on the off-diagonal part 
of the quotient matrix.
In particular, we construct
unbalanced Boolean ($q=2$) functions
such that the number of
essential arguments depends
exponentially on the degree
of the function.
\end{abstract}


\section{Introduction}

An arbitrary surjective function 
from the vertex set
of a graph~$G$ onto a finite set~$K$ 
(of \emph{colors})
of cardinality~$k$ is called 
a (vertex) \emph{coloring}, 
or \emph{$k$-coloring}, of~$G$. 
A $k$-coloring is called \emph{perfect}
if there is a $k$-by-$k$ 
matrix $\{S_{i,j}\}_{i,j\in K}$
(the \emph{quotient} matrix)
such that every vertex of color~$i$
has exactly~$S_{i,j}$ neighbors
of color~$j$.
The \emph{Hamming graph} $H(n,q)$ 
is a graph on the set 
of the words of length~$n$
over an alphabet~$\Sigma$ of size~$q$,
two words being adjacent if and only if 
they differ in exactly one position.
Colorings of the Hamming graph
can be considered as functions
in $n$ $\Sigma$-valued arguments.
Such a function~$f$ is said 
to depend 
on the $i$th argument 
\emph{essentially} (\emph{non-essentially}), 
$i\in\{0,\ldots,n-1\}$,
if there are (there are no)
two vertices~$\vc{x}$, $\vc{y}$
of $H(n,q)$ differing 
in only the $i$th position
such that $f(\vc{x})\ne f(\vc{y})$.
A $\{0,1\}$-valued function 
on the vertices of $H(n,2)$
is called a \emph{Boolean function}.

It is well known that if there exists
a perfect coloring of~$H(n,q)$ with quotient matrix~$S$,
then one can construct a perfect coloring
of~$H(n+1,q)$  
with quotient matrix~$S+(q-1)I$
($I$ is the identity matrix),
just adding a non-essentiall argument.

In this context,
it is naturally to consider the following
questions.

\begin{itemize}
    \item[{[$n_0$]}] For a given matrix~$S$ and a fixed~$q$,
    what is the minimum value of~$n$ (call it~$n_0(S)$)
   such that  there exists a perfect coloring of~$H(n,q)$
   with quotient matrix~$S$, up to the main diagonal.
   In particular does there exist such~$n$?
\end{itemize}

A criterion of existence of~$n_0$ for $2$-colorings
is known for prime~$q$
($q=2$: \cite{FDF:PerfCol}, $q>2$: \cite{BKMTV});
the exact value of~$n_0$ even for $2$-colorings of~$H(n,2)$
is unknown in many cases
(for example, it is not known if there exists a perfect coloring
of~$H(24,2)$ with quotient matrix $\Quot{1&23\\9&15}$).

\begin{itemize}
    \item[{[$n_1$]}]
    For a given matrix $S$ and a fixed $q$,
    assuming the existence of~$n_0(S)$,
    what is the maximum value of~$n$ (call it~$n_1(S)$)
   such that there exists a perfect coloring of~$H(n,q)$
   with quotient matrix~$S$, up to the main diagonal, without
   non-essential argument.
   Note that the existence of such~$n_1$ 
   for $q=2$ follows
   from the Simon--Wegener theorem~\cite{Wegener}.
\end{itemize}

Results in this direction are known for
balanced (with symmetric quotient matrix) $2$-colorings
of $H(n,2)$, some of known researches are in therms
of resilient Boolean functions.
 The construction from~\cite{CHS:2020}
 gives the following:
\begin{proposition}
    For the quotient matrix
    $S=\left(\begin{array}{cc} a&b\\b&a\end{array}\right)$ of
    a perfect $2$-coloring of $H(n,q)$,
    we have
    $$ n_1(S) \ge 3\cdot 2^{b-1} -2. $$
\end{proposition}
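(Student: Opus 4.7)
The plan is to exhibit, for each $b \ge 1$, an explicit perfect 2-coloring $f_b$ of $H(n_b, q)$ with $n_b := 3 \cdot 2^{b-1} - 2$, having symmetric quotient matrix whose off-diagonal entries equal $b$ and in which every coordinate is essential. Since $n_{b+1} = 2 n_b + 2$, the natural attack is induction on $b$ combined with a doubling construction that glues two copies of $f_b$ using two new ``switch'' arguments.

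For the base case $b = 1$, take $n_1 = 1$ and define $f_1$ on $H(1, q)$ by giving distinct colors to two chosen letters of $\Sigma$ (for $q = 2$, the identity function $f_1(x) = x$). The single coordinate is trivially essential, and each vertex of a given color has exactly one neighbor of the opposite color, so the off-diagonal part of the quotient matrix is~$1$, as required.

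For the inductive step, I would take two disjoint copies of the argument tuple of $f_b$, say $\bar{x}$ and $\bar{y}$, and introduce two fresh arguments $u, v$. The function $f_{b+1}$ on $H(2 n_b + 2, q)$ is defined to ``select'' between $f_b(\bar{x})$ and $f_b(\bar{y})$ according to the value of $(u, v)$, with a carefully chosen correction term so that at every vertex exactly one of $\{u, v\}$ is a color-changing coordinate (contributing the extra $+1$ to the off-diagonal) while the other is not, and exactly $b$ of the remaining $2 n_b$ coordinates are color-changing. The perfect-coloring property of $f_{b+1}$ and the value $b+1$ for the off-diagonal then follow from a short case analysis over the (four, for $q=2$) possible values of $(u, v)$, invoking the inductive perfect-coloring property of~$f_b$ on each branch of the switch.

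The main obstacle, and the essential content of the construction from \cite{CHS:2020}, is ensuring that all $2 n_b + 2$ arguments of $f_{b+1}$ remain essential. A naive variant such as $f_b(\bar{x}) \oplus f_b(\bar{y}) \oplus h(u, v)$ typically either inflates the off-diagonal to $2b$ or causes certain coordinates of $\bar{x}$ or $\bar{y}$ to become non-essential on some layer of the switching (for instance, on the layer where that branch is not ``active''). The combining rule must therefore be designed so that for every one of the $2 n_b + 2$ coordinates there exists at least one pair of vertices differing in precisely that coordinate and assigned different colors; the two switch coordinates require particular care, since they must simultaneously witness essentiality and contribute the correct amount to the quotient-matrix arithmetic. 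Once such a combining rule is in place, the induction runs straightforwardly and yields the stated lower bound on $n_1(S)$.
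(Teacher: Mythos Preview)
The paper does not actually prove this proposition: it simply attributes the bound to the construction in~\cite{CHS:2020} and moves on. Your outline is therefore already more detailed than what the paper offers, and the inductive skeleton you describe (base case $n_1=1$, recursion $n_{b+1}=2n_b+2$) does match the target $3\cdot 2^{b-1}-2$.

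That said, as a self-contained proof the proposal has a real gap, and you name it yourself: you never write down the combining rule for $f_{b+1}$ in terms of~$f_b$. Everything hinges on that formula. The claim that ``a short case analysis over the four possible values of $(u,v)$'' will simultaneously give off-diagonal entry~$b+1$ and essentiality of all $2n_b+2$ coordinates is precisely the thing to be proved; naive candidates (a multiplexer selecting $f_b(\bar x)$ or $f_b(\bar y)$ according to $u\oplus v$, possibly XORed with a correction) either break the perfect-coloring count or render one block of coordinates non-essential on some layer, exactly as you warn. Until the explicit gadget is written out and the four cases are checked, this is a plan rather than a proof. A smaller point: your base case only works for $q=2$; for $q>2$, assigning distinct colours to two letters of~$\Sigma$ in $H(1,q)$ neither defines a $2$-colouring of all vertices nor gives off-diagonal entry~$1$. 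Given the surrounding text and the Boolean-function reference~\cite{CHS:2020}, the proposition is presumably meant for $q=2$ despite the literal ``$H(n,q)$'', so this is likely moot, but it should be flagged.
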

    
Our goal is to generalize this result 
to unbalanced perfect colorings
by showing that $n_1$
can depend exponentially on the off-diagonal
part of the quotient matrix.

\subsection{Eigenvalues 
and degrees}\label{s:eigdeg}

The eigenvalues of a graph are 
the eigenvalues 
of its adjacency matrix.
Let $G$ be a graph with vertex set $V$ and let $\lambda$ be an eigenvalue of $G$. The set of neighbors of a vertex $x$ is denoted by $N(x)$.
A function $f:V\longrightarrow{\mathbb{R}}$ is called a {\em $\lambda$-eigenfunction} of $G$ if $f\not\equiv 0$ and the equality
\begin{equation}\label{Eq:Eigenfunction}
\lambda\cdot f(x)=\sum_{y\in{N(x)}}f(y)
\end{equation}
holds for any vertex $x\in V$. The set of functions $f:V\longrightarrow{\mathbb{R}}$ satisfying (\ref{Eq:Eigenfunction}) for any vertex $x\in V$ is called a 
{\em $\lambda$-eigenspace} of $G$.

The Hamming graph $H(n,q)$ has $n+1$ distinct eigenvalues $\lambda_i(n,q)=n(q-1)-q\cdot i$, where $0\leq i\leq n$.
Denote by $U_{i}(n,q)$ the $\lambda_{i}(n,q)$-eigenspace of $H(n,q)$. 

Every real-valued function $f$ 
on the vertices of $H(n,q)$ is uniquely 
represented as the sum
$$
f = \sum_{i=0}^n \alpha_i \phi_i,
$$
where $\phi_i\in U_i(n,q)$. In fact, $\phi_i$ is the projection of $f$ onto the eigenspace $U_{i}(n,q)$.
The maximum~$i$ for which $\alpha_i\ne 0$
is called the \emph{degree} of~$f$ 
(the degree of the constantly zero 
function is assumed to be~$0$).
It is not difficult to see that the degree
of a function~$f$ 
on the vertices of $H(n,2)$
(in particular, a Boolean function)
is the minimum degree 
of the polynomial
representation of~$f$,
where the vertex set of $H(n,2)$
is $\Sigma^n=\{a,b\}^n$ 
for any distinct real 
values~$a$ and~$b$ (typically, 
$1$ and~$-1$, or $0$ and~$1$; 
note that replacing $\Sigma$ 
does not affect on the degree
of a function).
The \emph{degree} of a coloring~$f$ is
the maximum degree 
of the \emph{characteristic functions}
$\delta_i(f(\cdot))$ of its colors~$i$,
where $\delta_i(j) = 1$ if $j=i$ 
and $\delta_i(j) =0$ otherwise.

\begin{lemma}[{see e.g. \cite{KroPot:CRC&EP}}]\label{l:eig}
For a perfect coloring of a graph, 
the characteristic functions
of each of the colors
is the sum of eigenfunctions 
of the graph
corresponding to eigenvalues 
of the quotient matrix.
Moreover, every eigenvalue 
of the quotient matrix 
is an eigenvalue of the graph.
\end{lemma}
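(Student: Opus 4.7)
The plan is to translate the defining property of a perfect coloring into a linear-algebraic identity for the characteristic functions, and then extract the spectral decomposition from that identity. Let $A$ denote the adjacency operator of $G$, and for each color $i$ let $\chi_i$ be the $\{0,1\}$-valued characteristic function of the color class $f^{-1}(i)$. Evaluating $A\chi_i$ at a vertex $x$ of color $j$ counts the neighbors of $x$ that have color $i$, which by the definition of perfectness equals $S_{j,i}$. This yields the identity
\[
A\chi_i \;=\; \sum_{j} S_{j,i}\,\chi_j
\]
for every color $i$.

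It follows immediately that the subspace $W := \mathrm{span}(\chi_1, \ldots, \chi_k)$ is $A$-invariant, and that the matrix of $A|_W$ in the basis $(\chi_1,\ldots,\chi_k)$ is $S^{T}$. Consequently the eigenvalues of $A|_W$ coincide with those of $S$, and since any eigenvalue of an invariant-subspace restriction of $A$ is an eigenvalue of $A$ itself, every eigenvalue of $S$ is an eigenvalue of the graph, which gives the second assertion.

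For the first assertion, a direct computation using the displayed identity shows that whenever $Sv = \mu v$ for $v = (v_1,\ldots,v_k)^{T}$, the function $g_v := \sum_i v_i \chi_i$ satisfies $Ag_v = \mu g_v$. Thus, provided $S$ is diagonalizable, fixing a basis of right eigenvectors of $S$ produces a basis of $W$ consisting of eigenfunctions of $A$, in terms of which each $\chi_i$ expands as a sum of eigenfunctions corresponding to eigenvalues of $S$.

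The only point requiring a little care is the diagonalizability of $S$, which I expect to be the main (small) obstacle. It follows from the symmetry of $A$: the restriction $A|_W$ is self-adjoint with respect to the inherited inner product, hence diagonalizable, so its matrix $S^{T}$ in the basis $(\chi_1,\ldots,\chi_k)$ is diagonalizable, and therefore so is $S$. With this in hand, the decomposition above goes through and the lemma follows.
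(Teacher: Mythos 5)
The paper does not prove this lemma itself; it is cited from \cite{KroPot:CRC&EP}. Your argument is the standard proof of this fact, and it is correct. The key identity $A\chi_i=\sum_j S_{j,i}\chi_j$ is right, the map $v\mapsto\sum_i v_i\chi_i$ is an isomorphism onto the $A$-invariant span $W$ that intertwines $S$ with $A|_W$, and the diagonalizability of $A|_W$ (hence of $S$) from self-adjointness is handled properly --- that is indeed the one step that is easy to gloss over, and you did not.

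Two small remarks. First, with the usual column-vector convention $T e_i=\sum_j M_{j,i}e_j$, your identity says the matrix of $A|_W$ in the basis $(\chi_1,\dots,\chi_k)$ is $S$, not $S^{T}$; this is purely notational and does not affect the spectrum or the rest of the argument. Second, an equally common route to the diagonalizability of $S$, which you may prefer since it stays at the level of matrices, is to use the double-counting identity $|f^{-1}(i)|\,S_{i,j}=|f^{-1}(j)|\,S_{j,i}$ (recorded in the paper as $\rho_i S_{i,j}=\rho_j S_{j,i}$): writing $D=\mathrm{diag}(|f^{-1}(i)|)$, this says $DS$ is symmetric, hence $D^{1/2}SD^{-1/2}$ is symmetric and $S$ is diagonalizable. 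Either way the lemma follows.
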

\begin{corollary}
\label{l:union}
    If a perfect $k$-coloring of an $r$-regular graph
has quotient matrix with only two eigenvalues,
then unifying any two colors
results in a perfect $(k-1)$-coloring.
Inversely, if there are a perfect $k$-coloring~$f$
and a perfect $2$-coloring~$g$
of the same regular graph, their quotient 
matrices have only two eigenvalues~$r$ and~$\theta$,
and the coloring 
$h(\vc{x})=(f(\vc{x}),g(\vc{x}))$
is a $(k+1)$-coloring, then it is a perfect
coloring with two eigenvalues~$r$ and~$\theta$.
\end{corollary}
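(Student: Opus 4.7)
The plan is to reformulate perfectness as invariance of the span of characteristic functions under the adjacency matrix $A$, and to work entirely inside $U_r\oplus U_\theta$; on that subspace $A$ acts as $\theta I+(r-\theta)P_r$, where $P_r$ is the spectral projection onto $U_r$, so $A$-invariance of a subspace is equivalent to its $P_r$-invariance.

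For the first part, I will use Lemma~\ref{l:eig} to place every color indicator $\chi_i$ in $U_r\oplus U_\theta$, and then show that the span $\tilde W=\operatorname{span}(\chi_a+\chi_b,\,\chi_j : j\ne a,b)$ corresponding to the merged coloring is $P_r$-invariant. Assuming the graph is connected (as the Hamming graph is), $U_r=\operatorname{span}(\mathbf 1)$, so $P_r$ sends every vector into $\operatorname{span}(\mathbf 1)$; since $\mathbf 1=(\chi_a+\chi_b)+\sum_{j\ne a,b}\chi_j$ visibly lies in $\tilde W$, this invariance holds, giving $A$-invariance of $\tilde W$ and hence perfectness of the merged coloring.

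For the second part, I will first note $W^f,W^g\subseteq U_r+U_\theta$ by Lemma~\ref{l:eig}, and $W^f+W^g\subseteq W^h$ from the identities $\chi_i^f=\sum_j\chi_{i,j}^h$ and $\chi_j^g=\sum_i\chi_{i,j}^h$. The heart of the argument will be the reverse inclusion $W^h\subseteq W^f+W^g$: since $\mathbf 1\in W^f\cap W^g$, a dimension count gives $\dim(W^f+W^g)\le k+1=\dim W^h$, and equality must hold unless $W^g\subseteq W^f$. In that degenerate case each $\chi_j^g$ would be a $\{0,1\}$-valued combination of the disjoint indicators $\chi_i^f$, forcing its coefficients into $\{0,1\}$ and making each color class of $g$ a union of color classes of $f$; then $h$ would coincide with $f$ and have only $k$ colors, contradicting the hypothesis. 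Once $W^h=W^f+W^g$ is established, $A$-invariance of $W^h$ follows as a sum of two $A$-invariant subspaces, so $h$ is perfect, and the containment $W^h\subseteq U_r+U_\theta$ restricts the eigenvalues of its quotient matrix to $\{r,\theta\}$.

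I expect the main delicate point to be the degenerate-case analysis above, since ruling out $W^g\subseteq W^f$ rests on the combinatorial structure of $\{0,1\}$-valued indicators rather than on the eigenvalue hypothesis alone; everything else reduces to straightforward invariance and dimension bookkeeping inside $U_r\oplus U_\theta$.
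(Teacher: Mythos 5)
Your argument is correct, and since the paper states this as a bare corollary of Lemma~\ref{l:eig} with no written proof, what you have done is essentially to fill in the argument the paper leaves implicit. The key reductions you make — perfectness $\Leftrightarrow$ $A$-invariance of the span of color indicators; restriction to $U_r\oplus U_\theta$ via Lemma~\ref{l:eig}; on that subspace $A=\theta I+(r-\theta)P_r$ so $A$-invariance becomes $P_r$-invariance — are exactly the right way to exploit the two-eigenvalue hypothesis, and they match the role the paper assigns to Lemma~\ref{l:eig}. Your treatment of the converse is the more original part: the dimension count $\dim(W^f+W^g)=k+2-\dim(W^f\cap W^g)$ together with ruling out $W^g\subseteq W^f$ by the ``$\{0,1\}$-coefficients force $g$ to be coarser than $f$'' observation is clean and correct, and it correctly explains why the hypothesis that $h$ is a genuine $(k+1)$-coloring is needed. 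Two small remarks. First, you rightly flag that your first-part argument uses $\dim U_r=1$; this requires connectedness, which the corollary does not state explicitly but which holds for $H(n,q)$, the only case the paper applies it to — it would be worth saying so in one line. Second, at the very end, ``$W^h\subseteq U_r+U_\theta$ restricts the eigenvalues of the quotient matrix to $\{r,\theta\}$'' deserves the half-sentence justification that the quotient matrix of $h$ is the matrix of $A$ restricted to the $A$-invariant subspace $W^h$ in the indicator basis, so its spectrum is contained in the spectrum of $A|_{U_r\oplus U_\theta}=\{r,\theta\}$; this is not literally the direction stated in Lemma~\ref{l:eig}, so it should not be cited as such.
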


\subsection{Density, correlation-immune 
and resilient functions}

For a coloring of a finite graph, 
we define the 
\emph{density}~$\rho(i)$
of each color~$i$ as the proportion
of the vertices of color~$i$
among the all vertices of the graph.
The \emph{density vector} of 
a coloring is the list of densities 
of all colors;
it has sum~$1$.
For a perfect coloring,
the density vector is uniquely determined
by the quotient matrix~$\{S_{i,j}\}_{i,j}$,
because $\rho_i S_{i,j} = \rho_j S_{j,i}$
by double-counting the edges between 
color-$i$ and color-$j$ vertices.

\section{Main construction}\label{s:mainconstr}

We start with defining 
and proving the existence
of two main ingredients
of the construction.

\subsection{Uniform collections of colorings}
We say that a collection
$(C_0$, \ldots, $C_{M-1})$ 
of colorings of~$H(n,q)$
is \emph{uniform}
if the multiset of colors
\begin{equation}\label{eq:uniform}
  \{C_0(\vc{x}),\ldots,C_{M-1}(\vc{x})\}  
\end{equation}
does not depend on the vertex~$\vc{x}$ 
of~$H(n,q)$.
The following lemma is straightforward.

\begin{lemma}\label{l:dens}
For a uniform collection
$(C_0$, \ldots, $C_{M-1})$
of perfect colorings 
with the same quotient matrix,
the multiplicity
of each color~$i$
in the multiset~\eqref{eq:uniform}
equals~$\rho_i M$, where~$\rho_i$
is the density of the color~$i$ in~$C_0$
(as well as in any~$C_j$,
$j=0,\ldots,M-1$)
and~$M$ is the number of colorings in the collection.    
\end{lemma}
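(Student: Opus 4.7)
The plan is to compute the multiplicity of color~$i$ in two different ways and equate the results, which is essentially a double-counting argument.

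First I would name the quantity of interest: let $m_i$ denote the multiplicity of color~$i$ in the multiset $\{C_0(\vc{x}),\ldots,C_{M-1}(\vc{x})\}$. By the uniformity hypothesis this number does not depend on~$\vc{x}$, so summing it over all $q^n$ vertices of $H(n,q)$ yields $q^n \cdot m_i$.

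Next I would re-express the same sum by swapping the order of summation, rewriting it as $\sum_{j=0}^{M-1} |\{\vc{x} : C_j(\vc{x}) = i\}|$. Here I would invoke the observation recalled in the previous subsection: since all $C_j$ share the same quotient matrix and the density vector of a perfect coloring is determined by the quotient matrix (via the relations $\rho_i S_{i,j} = \rho_j S_{j,i}$), each summand equals $\rho_i q^n$. Hence the total is $M \rho_i q^n$.

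Equating the two expressions gives $q^n m_i = M \rho_i q^n$, so $m_i = M \rho_i$. The only subtlety worth flagging is that the density~$\rho_i$ is the same for every $C_j$, which is why the argument requires the common quotient matrix (and not merely uniformity). Everything else is a one-line double count, consistent with the author's remark that the lemma is straightforward.
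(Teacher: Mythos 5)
Your double-counting argument is correct and is exactly the ``straightforward'' argument the paper has in mind (the paper itself omits the proof). You correctly identify that the common quotient matrix (not just uniformity) is what guarantees each $C_j$ has the same density vector, invoking the $\rho_i S_{i,j} = \rho_j S_{j,i}$ relation that the paper records just before the lemma.
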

One can easily construct 
a uniform collection 
from every perfect coloring 
of the Hamming graph.
\begin{lemma}\label{l:uniform}
For every 
perfect coloring~$C$
of $H(n,q)$, 
there is a 
uniform collection
of $q^n$ perfect 
colorings with 
the same quotient matrix.
\end{lemma}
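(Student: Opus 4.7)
The plan is to produce the uniform collection by taking all $q^n$ translates of~$C$ under the regular action of the group~$\Sigma^n$ on itself, where~$\Sigma$ is endowed with an arbitrary abelian group structure (say, $\Sigma=\mathbb{Z}_q$). Concretely, for each shift $\vc{z}\in\Sigma^n$, define the coloring $C_{\vc{z}}$ by $C_{\vc{z}}(\vc{x})=C(\vc{x}+\vc{z})$, and take the collection $(C_{\vc{z}})_{\vc{z}\in\Sigma^n}$, which has exactly $q^n$ members.

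First I would check that each $C_{\vc{z}}$ is a perfect coloring with the same quotient matrix as~$C$. This is immediate because the map $\vc{x}\mapsto\vc{x}+\vc{z}$ is an automorphism of $H(n,q)$: it bijectively sends the Hamming-$1$ ball around~$\vc{x}$ to the Hamming-$1$ ball around $\vc{x}+\vc{z}$, so the number of color-$j$ neighbors of a color-$i$ vertex under~$C_{\vc{z}}$ equals the corresponding count for~$C$.

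Next I would verify uniformity. Fix an arbitrary vertex $\vc{x}\in\Sigma^n$. Then
\[
\{C_{\vc{z}}(\vc{x})\mid \vc{z}\in\Sigma^n\}\;=\;\{C(\vc{x}+\vc{z})\mid \vc{z}\in\Sigma^n\}\;=\;\{C(\vc{y})\mid \vc{y}\in\Sigma^n\},
\]
as multisets, because the substitution $\vc{y}=\vc{x}+\vc{z}$ is a bijection on~$\Sigma^n$. The right-hand side is manifestly independent of~$\vc{x}$, so the collection is uniform in the sense of~\eqref{eq:uniform}.

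There is no real obstacle here; the only mild point is that $\Sigma$ carries no a priori group structure, which one fixes by identifying it with $\mathbb{Z}_q$ (any choice works, since the statement only concerns the existence of \emph{some} uniform collection). As a sanity check consistent with Lemma~\ref{l:dens}, each color~$i$ appears in the multiset above exactly $|C^{-1}(i)|=\rho_i q^n=\rho_i M$ times, as required.
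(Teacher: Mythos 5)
Your proof is correct and follows essentially the same route as the paper: take all $q^n$ translates of~$C$ under the group structure on~$\Sigma^n$, note that translations are automorphisms of $H(n,q)$ (so the quotient matrix is preserved), and observe that the multiset of colors at any fixed vertex is the full multiset $\{C(\vc{y})\}_{\vc{y}\in\Sigma^n}$, hence independent of the vertex. The only difference is the cosmetic choice of $\vc{x}+\vc{z}$ versus the paper's $\vc{x}-\vc{z}$, which yields the same collection.
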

\begin{proof} 
Let $(\Sigma,+)$ be an arbitrary 
abelian group of order~$q$,
$(\Sigma^n,+)$ 
the direct product of~$n$ copies of $(\Sigma,+)$ 
($+$ acts component-wise on $\Sigma^n$). 
And let $H(n,q)$ be defined
on the vertex set $\Sigma^n$.
The $M:=q^m$ perfect colorings 
$C_{\vc z}$, ${\vc z} \in \Sigma^n$,
where
$$ C_{\vc z}(\vc x):=C(\vc x - \vc z), $$
form a uniform collection, because 
$\{C_{\vc z}(\vc{x})\}_{\vc z\in \Sigma^n} $
equals
$
\{C(\vc{y})\}_{\vc y\in \Sigma^n}$ and hence does not depend on~$\vc{z}$. On the other hand, each $C_{\vc z}$
is a perfect coloring with the same quotient matrix as for~$C$, because $\vc{x}\to \vc{x} - \vc{z}$ 
is an automorphism 
of the Hamming 
graph~$H(n,q)$.
This completes the proof of the first claim of the lemma. The second claim is obvious.
\end{proof}
In many cases,
it is possible
to construct
a uniform collection 
of size smaller than~$q^n$.
For example,
in the coloring~$C$
has nonzero \emph{periods}
$\vc{v}$ such that
$C(\vc{x}+\vc{v})\equiv C(\vc{v})$,
then in the 
uniform collection
constructed in the proof,
every coloring is
repeated more than once;
by removing all repetitions, 
we obtain a smaller 
uniform collection.
Another obvious case is when the quotient matrix
is symmetric with respect to the colors,
for example, if $C$ is a perfect
$3$-coloring of $H(4,3)$ with quotient matrix
$$
\begin{pmatrix}
0&8&8\\8&0&8\\8 & 8 & 0  
\end{pmatrix},
$$
then a uniform $3$-collection 
$(C,\pi C, \pi \pi C)$ can be obtained from~$C$ by permuting the colors with the cyclic permutation~$\pi$.

\subsection{Partitions into RM-like codes}

As one of ingredients
for our construction,
we need a perfect coloring
of $H(M,q)$
with special parameters,
whose existence 
(with some restrictions
on the parameters $q$ and $M$)
is guaranteed by the following lemma.

\begin{lemma}\label{l:RM}
For every prime power~$q$ and $M=q^s$,
where $s$ is a positive integer,
there is a perfect 
$M q$-coloring~$E$ of $H(M,q)$ with the quotient matrix $T=(T_{i,j})$,
where 
\begin{equation}\label{eq:RM}
T_{i,j}=\begin{cases}
0, & \mbox{if $i\equiv j\bmod q $} \\
1, & \mbox{if $i\not\equiv j\bmod q $} 
\end{cases}.
\end{equation} 
\end{lemma}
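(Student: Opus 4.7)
The plan is to realize $E$ via an evaluation-style map inspired by the first-order Reed--Muller code. I will identify the $M=q^s$ coordinates of $H(M,q)$ with the elements of $\mathbb{F}_q^s$, so a vertex $\vc{x}$ is a sequence $(x_p)_{p\in\mathbb{F}_q^s}$ with $x_p\in\mathbb{F}_q$. Then I set
$$E(\vc{x}):=\bigl(k(\vc{x}),\,r(\vc{x})\bigr)\in\mathbb{F}_q\times\mathbb{F}_q^s,\qquad k(\vc{x}):=\sum_{p} x_p,\quad r(\vc{x}):=\sum_{p} p\,x_p,$$
which yields $q\cdot q^s=Mq$ color classes. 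Fixing any bijections $\sigma:\mathbb{F}_q\to\{0,\ldots,q-1\}$ and $\tau:\mathbb{F}_q^s\to\{0,\ldots,M-1\}$, I will enumerate the colors by $(k,r)\mapsto\sigma(k)+q\cdot\tau(r)$, so that the residue mod $q$ of the color index equals $\sigma(k)$; hence two color indices are congruent mod $q$ iff their $k$-components coincide, matching the left/right side dichotomy built into~\eqref{eq:RM}.

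To verify the quotient matrix I will compute directly how $E$ changes along an edge. A neighbor $\vc{x}'$ of $\vc{x}$ differs from $\vc{x}$ in exactly one coordinate $p_0\in\mathbb{F}_q^s$ by a nonzero element $\delta:=x'_{p_0}-x_{p_0}\in\mathbb{F}_q\setminus\{0\}$, giving
$$k(\vc{x}')=k(\vc{x})+\delta,\qquad r(\vc{x}')=r(\vc{x})+p_0\,\delta.$$
Fixing $\vc{x}$ of color $(k,r)$ and a target color $(k',r')$, the neighbor conditions reduce to $\delta=k'-k$ and $p_0\delta=r'-r$. When $k=k'$ this forces $\delta=0$, contradicting $\delta\neq 0$, so there are no such neighbors; when $k\neq k'$, $\delta=k'-k$ is a nonzero element of $\mathbb{F}_q$, so $p_0=(r'-r)\delta^{-1}$ is uniquely determined and there is exactly one qualifying neighbor. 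This is precisely the matrix $T$ in~\eqref{eq:RM}.

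The only nontrivial algebraic ingredient is the existence of the inverse $\delta^{-1}$ in $\mathbb{F}_q$, which is exactly where the prime-power hypothesis on $q$ is used; if $q$ were merely an integer, a single $\delta$ might have multiple preimages under multiplication by $p_0$ and the quotient entries would exceed $1$. I therefore do not expect any substantive obstacle: once the functional $r(\vc{x})=\sum_p p\,x_p$ is chosen, the argument is the two-line field computation above. Spotting that this particular linear form, rather than an arbitrary $\mathbb{F}_q$-valued functional, is the right refinement of the ``sum-of-coordinates'' $q$-coloring $k(\vc{x})$ is the main conceptual content of the construction.
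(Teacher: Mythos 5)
Your construction $E(\vc{x})=\bigl(\sum_p x_p,\ \sum_p p\,x_p\bigr)$ is exactly the map $D$ in the paper's proof (the paper indexes the $M$ coordinates by $i$ and attaches $\vc{\alpha}_i\in\GF{q}^s$ to the $i$th one, which is just your indexing by $p\in\mathbb{F}_q^s$ in different notation), and your color numbering $\sigma(k)+q\cdot\tau(r)$ correctly realizes the ``$\bmod\ q$'' dichotomy of~\eqref{eq:RM}. The paper stops at ``we claim that $D$ is a required perfect coloring,'' so your edge computation ($\delta=k'-k$, $p_0=(r'-r)\delta^{-1}$) supplies the verification the paper omits, and it is correct.
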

\begin{proof}
Let $\Sigma$ be a finite field $\GF{q}$, and let
$\vc{\alpha}_0$, \ldots,
$\vc{\alpha}_{M-1}$
be all elements of~$\GF{q}^s$.
Define
$$
D(x_0,\ldots,x_{M-1})=
(\sum_i x_i, \sum_i x_i \vc{\alpha}_i)
$$
(so, the colors are pairs
$(a,\vc{\beta})$, where $a\in \GF{q}$ and $\vc{\beta}\in \GF{q}^s$.
We claim that $D$ is a required perfect coloring.
\end{proof}

The support $C^{-1}((0,\vc{0}))$ of the color $(0,\vc{0})$ from the proof
is known as 
a generalized Reed--Muller code
\cite[\S 5.4]{AssmKey92} 
$\mathcal{R}_q(r,m)$ 
of order~$r=(q-1)m-2$.
Codes with such parameters (RM-like codes) and partitions
into such codes (essentially, perfect colorings with parameters as in Lemma~\ref{l:RM}) are studied in~\cite{Romanov:2022}, \cite{KrotovShi:3ary}.
Existence of such codes for non-prime-power~$q$ is an open problem related 
with the problem of existence of $1$-perfect code, see the survey~\cite{Heden:2010:non-prime}. For prime-$p$ power $q$ and $n$ being a power of~$p$ (not nesessarily a power of~$q$), such codes and partitions can also exist, {see \cite{BesKro:Ch6}}.

\subsection{Construction}

\begin{theorem}\label{th:rec}
Assume that there exists 
a uniform collection of~$M$
perfect $r$-colorings of~$H(n,q)$
with quotient matrix~$S$, 
and assume that the first 
of those colorings essentially depends
on all arguments.
Assume that there exists a perfect 
$Mq$-coloring~$E$ of $H(M,q)$ with 
quotient matrix~$T$, 
see~\eqref{eq:RM}.
Then there exists 
a uniform collection 
of~$M$ 
perfect $r$-colorings of~$H(qn+M,q)$
with quotient matrix~$S+(q-1)^2nI + (q-1)MP$,
where~$I$ is the identity matrix and~\textcolor{black}{$P$}
is the square matrix
whose each row is the 
density vector
corresponding to~$S$,
such that each coloring
essentially depends
on all arguments.
\end{theorem}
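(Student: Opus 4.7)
The plan is to use the $q$ copies of $\Sigma^n$ to carry $q$ independent blocks $\vc{x}_0,\dots,\vc{x}_{q-1}$ and the extra $M$ coordinates to carry a ``selector'' word $\vc{y}\in\Sigma^M$. For $j=0,\dots,M-1$, define
$$F_j(\vc{x}_0,\dots,\vc{x}_{q-1},\vc{y}):=C_{c+j}(\vc{x}_a),$$
where $(a,c)\in\GF{q}\times\GF{q}^s$ is the color $E(\vc{y})$, the component $a\in\GF{q}\cong\{0,\dots,q-1\}$ selects the block $\vc{x}_a$, and $j$ shifts which of the $M$ given inner colorings is read (addition in the abelian group $\GF{q}^s$ identified with $\{0,\dots,M-1\}$). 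Uniformity of $(F_0,\dots,F_{M-1})$ is then tautological: for every fixed input the multiset $\{F_j(\cdot)\}_j$ equals $\{C_k(\vc{x}_a)\}_k$, which by the uniformity hypothesis on $(C_0,\dots,C_{M-1})$ is a fixed multiset, independent of $\vc{x}_a$ and hence of the full input.

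To show that each $F_j$ is perfect with the stated quotient, I would partition the $(q-1)(qn+M)$ edges incident to a vertex $(\vc{x}_0,\dots,\vc{x}_{q-1},\vc{y})$ into three types. Flipping a coordinate of $\vc{x}_k$ with $k\ne a$ leaves $F_j$ unchanged (only $\vc{x}_a$ is read), producing the diagonal summand $(q-1)^2 nI$. Flipping a coordinate of $\vc{x}_a$ acts on $F_j$ exactly as it acts on $C_{c+j}(\vc{x}_a)$, producing the summand $S$ because $C_{c+j}$ is perfect with quotient $S$. Flipping a coordinate of $\vc{y}$ moves $\vc{y}$ to some neighbor $\vc{y}'$; the key observation, forced by the shape of $T$ in Lemma~\ref{l:RM}, is that $E(\vc{y}')=(a',c')$ with $a'\ne a$ and that every pair $(a',c')$ with $a'\ne a$ is attained by \emph{exactly one} $\vc{y}$-neighbor. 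Summing over the $q-1$ values of $a'$ and applying uniformity of $(C_k)$ to the independent argument $\vc{x}_{a'}$ (the sum over $c'$ sweeps all $M$ inner colorings) yields exactly $(q-1)M\rho_{i'}$ neighbors of color $i'$, i.e.\ the summand $(q-1)MP$.

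Essential dependence on every coordinate is the last step. For a coordinate of some $\vc{x}_k$, pick $\vc{y}$ with $E(\vc{y})=(k,-j)$ (possible because the coloring $E$ is, by definition, surjective); then $F_j(\vc{x}_0,\dots,\vc{y})=C_0(\vc{x}_k)$, and the hypothesis that $C_0$ essentially depends on every coordinate supplies a pair of inputs differing only at the prescribed coordinate with different $F_j$-values. For a coordinate of $\vc{y}$, pick any neighbor $\vc{y}'$ of $\vc{y}$ differing only there; as already noted, the block selectors $a$ and $a'$ differ, so $\vc{x}_a$ and $\vc{x}_{a'}$ are \emph{independent} variables in $F_j(\ldots,\vc{y})=C_{c+j}(\vc{x}_a)$ and $F_j(\ldots,\vc{y}')=C_{c'+j}(\vc{x}_{a'})$, and using that each $C_\ell$ is surjective I set these independent arguments to distinct target colors.

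The main obstacle is the third case of the quotient-matrix count: it is here that the rigidity of the row structure of $T$ (exactly one neighbor per off-residue pair) combines with uniformity to produce the $P$ term, and it is precisely this step that explains why $E$ must be an RM-like coloring rather than an arbitrary perfect coloring of $H(M,q)$.
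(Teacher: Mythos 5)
Your proposal is correct and follows the same path as the paper: select block and inner coloring via $E$'s color, use the rigidity of $T$ (each off-residue pair hit by exactly one neighbor) together with Lemma~\ref{l:dens} to produce the $(q-1)MP$ term, use $C_0$'s essential dependence for the block coordinates, and obtain a uniform output family by shifting the inner collection (you shift by addition in $\GF{q}^s$, the paper by a cyclic permutation of $\{0,\dots,M-1\}$ -- a cosmetic difference). The only other minor variance is in the essential-dependence argument for the selector coordinates, where the paper anchors at $E(\vc y')=0$ and compares $D_0$ against a neighboring $D_{E(\vc y'')}$ that ignores the first block, while you argue from surjectivity of the $C_\ell$ for any $\vc y$; both work.
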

\begin{proof}
Let $C_0$, \ldots, $C_{M-1}$ be 
the uniform collection
from the hy\-po\-the\-sis of the theorem.
For each $i \in \{0,\ldots,M-1\}$
and  $j \in \{0,\ldots,q-1\}$,
define
\begin{equation}\label{eq:D}
 D_{q\cdot i+j}(\vc{x}^0,
 \ldots,
 \vc{x}^{q-1})
 := C_i (\vc{x}^j),
\qquad \vc{x}^l \in \Sigma^n.
\end{equation}
That is, $D_{q\cdot i+j}$
is a perfect coloring
of $H(qn,q)$ with quotient matrix
$S+(q-1)^2nI$ obtained from~$C_i$
by adding $(q-1)n$ inessential arguments, 
$jn$ in the left and $(q-1-j)n$ in the right.
Now define
$$
F(y_0,\ldots,y_{M-1},x_0,\ldots,x_{qn-1}) :=
D_{E(y_0,\ldots,y_{M-1})}(x_0,\ldots,x_{qn-1}).
$$

We first claim that $F$
depends essentially 
on all $qn+M$ arguments.
Indeed, since $D_{j}$
depends essentially
of the $j$th group 
of $n$ arguments,
$j=0,\ldots,q-1$,
we see that $F$ depends
essentially 
on each of the last 
$qn$ arguments.
Next, consider a tuple 
$\vc{y}'=(y'_0, \ldots, y'_{M-1})$ 
such that
$E(\vc{y}')=0$.
From the parameters of
the perfect coloring~$E$,
for every~$\vc{y}''$
adjacent to~$\vc{y}'$ in $H(M,q)$
we have 
$E(\vc{y}'')=qi''+j''\not\equiv 0\bmod q$.
Therefore, substituting
such~$\vc{y}''$ for
the first~$M$ arguments
of~$F$,
we obtain 
$D_{qi''+j''}$ that does not
depend essentially 
on the first~$n$
arguments $x_0$, \ldots, $x_{n-1}$.
On the other hand,
by substituting~$\vc{y}'$
we obtain 
$D_{0}$, which depends
essentially 
on the first~$n$
arguments.
It follows that~$F$
essentially depends
on each of the first~$M$
arguments.

Next, we claim that $F$
is a perfect coloring
with required quotient matrix.

For each vertex $\vc{z}=(\vc{y},\vc{x})=(y_0,\ldots,y_{M-1},x_0,\ldots,x_{qn-1})$
of color~$k=F(\vc{z})$,
we separate its neighbors
into two types.
Every neighbor of $1$st
type differs from $\vc{z}$
in one of the
last~$qn$ positions. 
Since 
$D_{E(\vc{y})}$ 
is a perfect coloring 
with quotient matrix 
$S'':=S+(q-1)^2nI$,
the vertex $\vc{z}$ 
has exactly $S''_{k,t}$ neighbors 
of $1$st type of color~$t$.

Every neighbor of $2$nd type
differs from $\vc{x}$
in one of the first~$M$ positions.
Let $E(\vc{y}) = qi+j$, 
$j\in\{0,...,q-1\}$, 
$i\in\{0,...,M-1\}$.
From the quotient
matrix of~$E$,
we see that
$\vc{y}$ has exactly
one neighbor~$\vc{y}'$
such that
$E(\vc{y}') = qi'+j'$,
for all
$j'\in\{0,...,q-1\}\setminus\{j\}$,
$i'\in\{0,...,M-1\}$,
and no other neighbors.
Now we see that 
the multiset of colors
of the neighbors of $2$nd type is 
$$ 
\biguplus_{j'\in\{0,...,q-1\}\setminus\{j\}}
\{D_{q\cdot i'+j'}(\vc{x})\}_{i'=0}^{M-1}
= \biguplus_{j'\in\{0,...,q-1\}\setminus\{j\}}
\{C_{i'}(x_{nj'},...,x_{nj'+n-1})\}_{i'=0}^{M-1}.
$$
Since $(C_0,\ldots,C_{M-1})$
is a uniform collection
of colorings,
by Lemma~\ref{l:dens}
the multiset under the sum
has multiplisity~$\rho_{t} M$
for every color~$t$. 
After summing we get
multiplisity~$(q-1)\rho_{t} M$,
where $\rho_{t}$ is the $(k,t)$th element
$P_{k,t}$ of the matrix~$P$
for every~$k$.

Finally, we see that $F$
is a perfect coloring with
quotient 
matrix~$S+(q-1)^2nI + (q-1)MP$.
It remains to construct
a uniform collection.
Denote
$F_{C_0,\ldots,C_{M-1}}=F$,
including in this notation
the dependence of~$F$ on
the income collection
of colorings. Now we cyclically
permute the colorings from the
income collection by 
a cyclic permutation~$\pi$ 
of~$\{0,\ldots,M-1\}$.
Denote 
$F_s:=F_{C_{\pi^s(0)},\ldots,C_{\pi^s(M-1)}}$. 
It is straightforward 
to see that
$\{F_s\}_{s=0}^{M-1}$
is a uniform collection.
\end{proof}
\begin{remark}
It is possible to reduce the number 
of arguments in the resulting perfect coloring in 
Theorem~\ref{th:rec} by reducing the number 
of inessential arguments 
in the definition~\eqref{eq:D}
of~$D_{qi+j}$. 
However in general, 
for a given off-diagonal part of 
the quotient matrix,
the existence 
of perfect colorings in $H(n_0,q)$
and $H(n_1,q)$ without non-essential arguments
does not guarantee the same for $H(n_2,q)$
where $n_0<n_2<n_1$.
The easiest counterexample is
the quotient matrix 
$\begin{pmatrix}
 n-2 & 2 \\ 2 & n-2  
\end{pmatrix}$, $q=2$, $n=2,3,4$.

\end{remark}
Applying the construction in Theorem~\ref{th:rec}
recursively, we get the following.
\begin{corollary}\label{c:constr}
    Under the hypothesis and notation of Theorem~\ref{th:rec},
    for every nonnegative integer~$i$
    there exists a perfect coloring of 
    $H(n_i,q)$, where 
    $$n_i=\Big(n-\frac{M}{1-q}\Big)q^i+\frac{M}{1-q},$$
    with quotient matrix
    $$ S + (n_i-n-iM)(q-1)I + i(q-1)MP$$
    such that the coloring 
    essentially depends on all arguments.
\end{corollary}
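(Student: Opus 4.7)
My plan is a straightforward induction on~$i$, with the base case $i=0$ being exactly the hypothesis of Theorem~\ref{th:rec} (a uniform collection of~$M$ perfect $r$-colorings of $H(n,q)$ with quotient matrix~$S$, the first essentially depending on all arguments), for which $n_0 = n$ and the quotient matrix reduces to~$S$.

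For the inductive step, I assume the statement holds for~$i$, together with the stronger property that there exists a \emph{uniform collection} of~$M$ such perfect colorings of~$H(n_i,q)$ (rather than a single one), each essentially depending on all arguments. This strengthening is what Theorem~\ref{th:rec} actually produces, so it propagates through the recursion without extra work. Applying Theorem~\ref{th:rec} with $n$ replaced by~$n_i$ and~$S$ replaced by the $i$th quotient matrix yields a uniform collection of~$M$ perfect colorings of~$H(qn_i+M,q)$ essentially depending on all arguments, so I set $n_{i+1}:=qn_i+M$.

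The key algebraic check has two parts. First, the recurrence $n_{i+1}=qn_i+M$ with $n_0=n$ has the closed-form solution
\[
n_i = \Bigl(n-\frac{M}{1-q}\Bigr)q^i+\frac{M}{1-q},
\]
as one verifies directly. Second, for the quotient matrix, Theorem~\ref{th:rec} adds $(q-1)^2 n_i I + (q-1)MP_i$ to the previous matrix, where $P_i$ is built from the density vector of the $i$th coloring. Here I use the crucial observation that the density vector is invariant under the recursion: the construction~\eqref{eq:D} and the composition with~$E$ preserves color proportions, so $P_i=P$ for all~$i$. Consequently, from
\[
S+(n_i-n-iM)(q-1)I+i(q-1)MP
\]
I get, after one step,
\[
S+\bigl[(n_i-n-iM)(q-1)+(q-1)^2 n_i\bigr]I+(i+1)(q-1)MP,
\]
and the bracketed coefficient simplifies to $(q-1)(qn_i-n-iM)=(q-1)(n_{i+1}-n-(i+1)M)$, matching the claimed formula.

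The main obstacle, modest as it is, is recognizing that the density-vector matrix~$P$ appearing in Theorem~\ref{th:rec} does not change along the recursion; without this invariance the formula would involve a product of distinct density matrices and would not telescope. Once this is noted, the proof is pure bookkeeping on~$n_i$ and the coefficients in front of~$I$ and~$P$.
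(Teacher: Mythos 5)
Your proof is correct and follows exactly the paper's (one-line) argument: apply Theorem~\ref{th:rec} recursively with $n_{i+1}=qn_i+M$, strengthening the induction hypothesis to carry the uniform collection forward. The one point the paper leaves entirely implicit, and which you rightly single out as the crux, is that the density matrix $P$ is invariant under each step; this holds because the increments $(q-1)^2 n_i I$ and $(q-1)MP$ added to the quotient matrix change neither the off-diagonal ratios $S_{ij}/S_{ji}$ nor, therefore, the solution of $\rho_i S_{ij}=\rho_j S_{ji}$, so the same $\rho$ (and hence $P$) persists throughout the recursion.
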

The diagonal part of the quotient matrix
grows exponentially in~$i$, which is seen from 
the term $(n_i-n-iM)(q-1)I$ and the formula 
for~$n_i$, 
while the off-diagonal part grows linearly,
which is seen from 
the term $i(q-1)MP$.

\begin{theorem}\label{th:bc}
    Let $b$ and $c$ be positive integers
and $e$ their greatest common divisor.
\begin{itemize}
    \item[\rm(a)] If $M=\frac{b+c}e$
is a power of~$2$, then
there exists a perfect $2$-coloring
of $H(N,2)$
without non-essential arguments
and with quotient matrix 
$\begin{pmatrix}
N-b & b \\ c & N-c    
\end{pmatrix}$,
where $N = (2M-1)2^{e-1}-M $.
\item[\rm(b)] Moreover, if additionally $M\ge 8$
is an odd power of~$2$, 
then the same holds with 
$N = M\cdot 2^{e}-M $.
\end{itemize}
\end{theorem}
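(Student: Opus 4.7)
The plan is to invoke Corollary~\ref{c:constr} with $i=e-1$ iterations, starting from a uniform collection of $M$ perfect $2$-colorings whose off-diagonal entries are $(b',c')$, where $b'=b/e$ and $c'=c/e$. Since $M\rho_0=c'$ and $M\rho_1=b'$, the corollary's formula carries off-diagonal $(b',c')$ to $(eb',ec')=(b,c)$ in $e-1$ steps and sends $n_0=n$ to $n_{e-1}=(n+M)2^{e-1}-M$; the diagonal of the final quotient is then forced by the regularity $N=n_{e-1}$, so it equals $N-b$ and $N-c$ as required. Choosing $n=M-1$ yields the value $N=(2M-1)2^{e-1}-M$ of part~(a), while $n=M$ yields the sharper $N=M\cdot 2^{e}-M$ of part~(b).

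For part (a), I would build the starting collection from the syndrome map of the binary Hamming code. Let $H$ be an $s\times(M-1)$ matrix whose columns are exactly the nonzero vectors of $\mathbb{F}_2^s$; then $f(\vc{x})=H\vc{x}$ is a perfect $M$-coloring of $H(M-1,2)$ with quotient matrix $J_M-I_M$ (ones off the diagonal, zeros on it). Fix any $A\subseteq\mathbb{F}_2^s$ with $|A|=c'$ and, for $\sigma\in\mathbb{F}_2^s$, set $C_\sigma(\vc{x})=1$ if $f(\vc{x})+\sigma\in A$ and $C_\sigma(\vc{x})=0$ otherwise. A direct count from $J_M-I_M$ shows each $C_\sigma$ is a perfect $2$-coloring with quotient $\left(\begin{smallmatrix} c'-1 & b' \\ c' & b'-1 \end{smallmatrix}\right)$, and $(C_\sigma)_{\sigma\in\mathbb{F}_2^s}$ is uniform because $f(\vc{x})+\sigma$ traverses all of $\mathbb{F}_2^s$ as $\sigma$ varies, so every vertex is coloured $1$ in exactly $|A|=c'$ of the colorings. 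Essentiality of every $C_\sigma$ follows from $|A|=c'$ being odd (note: $b'+c'=M$ is even and $\gcd(b',c')=1$, so both $b',c'$ are odd), which prevents $A$ from being a union of cosets of any nontrivial subgroup of $\mathbb{F}_2^s$ (such subgroups have even order); hence $A$ is not translation-invariant by any nonzero element, in particular not by any column of $H$. Corollary~\ref{c:constr} then delivers (a).

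For part (b), the starting collection must live on $H(M,2)$. The natural candidate is obtained by unifying the $2M$ colors $(a,\beta)\in\mathbb{F}_2\times\mathbb{F}_2^s$ of the coloring from Lemma~\ref{l:RM} via a partition $A=\{(0,\beta):\beta\in A_0\}\cup\{(1,\beta):\beta\in A_1\}$ with $|A_0|=|A_1|=c'$; a direct computation shows this yields a perfect $2$-coloring of $H(M,2)$ with quotient $\left(\begin{smallmatrix} M-b' & b' \\ c' & M-c' \end{smallmatrix}\right)$, and shifts by $\mathbb{F}_2^s$ in the second color coordinate give a uniform collection of size $M$. Here the argument in direction $i$ is essential iff $A_0\ne A_1+\alpha_i$, which for most $(b',c')$ is easily arranged by placing $A_0$ and $A_1$ in distinct translation orbits of $c'$-subsets of $\mathbb{F}_2^s$. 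The main obstacle is the extremal case $(b',c')\in\{(1,M-1),(M-1,1)\}$: size-$1$ and size-$(M-1)$ subsets of $\mathbb{F}_2^s$ each form a single translation orbit, so $A_0$ and $A_1$ are always related by a translation and the simple construction inevitably leaves some argument non-essential. This is exactly where the hypothesis that $M\ge 8$ is an odd power of $2$ enters, by guaranteeing the existence of an alternative perfect $2$-coloring of $H(M,2)$ realising the extremal quotient with all essential arguments (and extendable to a uniform collection of size $M$). Once the initial collection is secured, Corollary~\ref{c:constr} finishes the construction exactly as in (a).
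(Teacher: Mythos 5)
Your reduction to Corollary~\ref{c:constr} with $i=e-1$ is exactly the paper's framework, and your part~(a) is essentially the paper's proof: the paper colours by consecutive cyclic runs $H_i\cup\cdots\cup H_{i+c'-1}$ of Hamming-code cosets while you colour by $\mathbb{F}_2^s$-translates $A+\sigma$ of a fixed $c'$-subset of syndromes, but both use the same partition into Hamming cosets, the same uniformity count, and the same ``$c'$ odd rules out translation-invariance'' argument for essentiality. That part is fine.

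Part~(b) has a genuine gap. You build the initial uniform collection on $H(M,2)$ by merging colours $(a,\beta)$ of the RM-like coloring using sets $A_0,A_1\subseteq\mathbb{F}_2^s$ of size $c'$, and you correctly identify that direction~$i$ is inessential iff $A_1=A_0+\vc{\alpha}_i$, so essentiality forces $A_0$ and $A_1$ to lie in distinct translation orbits. But you then observe---correctly---that this is impossible when $c'\in\{1,M-1\}$, and you wave at ``an alternative perfect $2$-coloring of $H(M,2)$ realising the extremal quotient with all essential arguments (and extendable to a uniform collection of size $M$)'' supposedly supplied by ``$M\ge 8$ an odd power of $2$'', without exhibiting it. That is precisely the nontrivial content of part~(b): the theorem explicitly allows $(b',c')=(1,M-1)$, and the hypothesis on $M$ is not a generic hypothesis that makes translation orbits plentiful but the exact condition under which the construction of \cite{Kro:TernDiamPerf} is available. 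The paper does not case-split at all: it uses two extended Hamming codes $C,C'$ of length $M$ (one partitioning the even-weight vertices, one the odd-weight vertices, with the matching-in-all-directions property from \cite{Kro:TernDiamPerf}) and colours $c'$ cosets from each family; this gives quotient $\left(\begin{smallmatrix} c' & b' \\ c' & b' \end{smallmatrix}\right)$ and full essentiality uniformly in $c'$, including $c'=1$. Without supplying such a construction, your proof of~(b) is incomplete. (There is also a small bookkeeping slip: with $|A_0|=|A_1|=c'$ and colour~$1$ on $A$, the quotient you actually get is $\left(\begin{smallmatrix} b' & c' \\ b' & c' \end{smallmatrix}\right)$, not $\left(\begin{smallmatrix} c' & b' \\ c' & b' \end{smallmatrix}\right)$; this is only a colour relabelling and not the substantive issue.)
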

\begin{proof} (a)
 By Corollary~\ref{c:constr},
 where
 $q=2$, $i=e-1$, $n=M-1$, $n_{i}=N$, $MP=\begin{pmatrix}
     c' & b' \\ c' & b'
 \end{pmatrix}$, $b' = \frac be$, $c'=\frac ce$,
 it is sufficient
 to construct 
 a uniform collection 
 of~$M$ perfect $2$-colorings
 of~$H(M-1,2)$ with quotient matrix
 $\begin{pmatrix}
     c'-1 & b' \\ c' & b'-1
 \end{pmatrix}$
 without inessential arguments.
 To do this, we consider 
 the partition of the vertex set of $H(M-1,2)$ into $M$ cosets
 $H_0$, \ldots, $H_{M-1}$
 of the Hamming code of length~$M-1$,
 which is a linear subspace 
 of the binary vector space 
 associated to
 the vertex set of $H(M-1,2)$
 and at the same time 
 a $1$-perfect code, 
 i.e., its characteristic
 function is a perfect coloring
 with quotient matrix
 $\begin{pmatrix}
     0 & M-1 \\ 1 & M-2
 \end{pmatrix}$.
 Define the $2$-coloring
 $C_i$, $i=0,\ldots,M-1$
 as the characteristic
 function of the union
 $H^*_i=H_i \cup H_{i+1}\cup \ldots
 \cup H_{i+c'-1} $ of $c'$ cosets,
 where the indices are
 calculated modulo~$M$. 
 Since every vertex is contained
 in exactly~$c'$ sets from~$H^*_i$,
 $i=0,\ldots,M-1$, the collection
 $(C_0,\ldots,C_{M-1})$ is uniform.
 It is easy to see that
 every coloring~$C_i$ is perfect
 with required quotient matrix
 (we can 
 use the second part of 
 Corollary~\ref{l:union}
 to see that $H_i$, \ldots $H_{i+c'-1} $ induce a $(c'+1)$-coloring, 
 then the first part to see that their union induces a $2$-coloring).
 It remains to note that if 
 $C_i$ has an inessential argument,
 then the cosets
 $H_0$, \ldots, $H_{M-1}$
 are grouped in pairs $(H_l,H_{\pi(l)})$,
 where the vertices
 of $H_l$, $H_{\pi(l)}$ are colored
 with the same color.
 This is impossible because 
 $c'$ is odd; hence all arguments 
 of the coloring~$C_i$ are essential.

 (b) In \cite{Kro:TernDiamPerf}, 
 it is shown that there are 
 two extended Hamming 
 codes~$C$ and~$C'$ in $H(M,2)$
 such that 
 every even-weight coset of~$C$
 and odd-weight coset of~$C'$ 
 induce a matching with all~$M$
 directions involved.
 We partition the vertex set 
 into~$M$ even-weight cosets
 $C_0$, \ldots, $C_{M-1}$
 of~$C$
 and~$M$ odd-weight cosets
 $C'_0$, \ldots, $C'_{M-1}$
 of~$C'$.
 Coloring $c'$ cosets of~$C$ and
 $c'$ cosets of~$C'$ by one color 
 and the other cosets by another color,
 we get a perfect coloring
 with quotient matrix
 $\begin{pmatrix}
     c' & b' \\ c' & b'
 \end{pmatrix}$
 (this follows from the following 
 property of the extended Hamming code:
 every odd-weight vertex has 
 exactly one neighbor from the code).
 Since there are an even-weight coset 
 of~$C$ and an odd-weight coset 
 of~$C'$ that are colored 
 by different colors, we find that 
 all arguments are essential.
 By cyclic permutation on the indices 
 of the cosets, we obtain 
 a uniform collection of colorings.
 Applying Corollary~\ref{c:constr} 
 completes the proof.
\end{proof}


\begin{corollary}\label{c:bu}
    If $\rho=r/s$, where  
    $s$ is a power of~$2$
    and $r$ is odd, $0<r<s$,
    then for every~$e$ there exists 
    a Boolean function 
    of degree~$es/2$
    in $n=(2s-1)2^{e-1}-s$ essential variables
    with density~$\rho$ of ones.
\end{corollary}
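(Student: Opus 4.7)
The plan is to deduce Corollary~\ref{c:bu} as a direct instance of Theorem~\ref{th:bc}(a), by choosing $b$ and $c$ so that the gcd appearing in the theorem matches the $e$ of the corollary and so that the density works out to $\rho$.

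Concretely, I would take $b = er$ and $c = e(s-r)$. Since $s$ is a power of $2$ and $r$ is odd, $\gcd(r, s-r) = \gcd(r, s) = 1$, whence $\gcd(b, c) = e$; that is, the parameter denoted $e$ inside the statement of Theorem~\ref{th:bc} coincides with the $e$ of the corollary. Consequently $M = (b+c)/e = s$ is a power of $2$, as the theorem requires, and the resulting length is $N = (2M-1)2^{e-1} - M = (2s-1)2^{e-1} - s$, which is precisely the $n$ claimed in the corollary. Thus Theorem~\ref{th:bc}(a) supplies a perfect $2$-coloring of $H(n,2)$ without non-essential arguments and with quotient matrix $\begin{pmatrix} n-b & b \\ c & n-c\end{pmatrix}$; I would view this coloring directly as the sought Boolean function.

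It remains to extract the density and the degree. From the balance $\rho_0 b = \rho_1 c$ together with $\rho_0 + \rho_1 = 1$, the density of ones is $b/(b+c) = er/(es) = r/s = \rho$. The quotient matrix has eigenvalues $n$ and $n - (b+c) = n - es$; comparing with $\lambda_i(n,2) = n - 2i$, the nontrivial eigenvalue corresponds to $i = es/2$, which is an integer because $s$ is a power of $2$ not smaller than $2$ (as $0 < r < s$ forces $s \ge 2$). By Lemma~\ref{l:eig}, the characteristic function of each color is a sum of eigenfunctions for these two eigenvalues, and its $U_{es/2}(n,2)$-component is nonzero since $0 < \rho < 1$ forces the coloring to be non-constant; hence the degree equals exactly $es/2$.

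The main issue is purely notational: one must keep the parameter $e$ of the corollary separate from the letter $e$ used inside the statement of Theorem~\ref{th:bc}, and verify that our specific choice of $b,c$ identifies the two. Beyond that, the proof is a bookkeeping specialization of the theorem combined with a $2\times 2$ eigenvalue computation, so no genuine difficulty is expected.
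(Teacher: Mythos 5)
Your proof is correct and is the intended specialization of Theorem~\ref{th:bc}(a); the paper states Corollary~\ref{c:bu} without giving the deduction, and your argument supplies exactly the expected one. The choice $b=er$, $c=e(s-r)$ correctly forces $\gcd(b,c)=e$ (because $r$ is odd and $s\ge 2$ is a power of $2$), yields $M=s$ and $N=(2s-1)2^{e-1}-s$, and the density $b/(b+c)=r/s$ and degree $es/2$ (from the nonprincipal eigenvalue $N-es=N-2\cdot\tfrac{es}{2}$ together with non-constancy) follow as you say.
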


\subsection*{Acknowledgment}

The author thanks Alexandr Valyuzhenich
for helpful discussion.
The study was funded by the Russian Science Foundation, grant 22-11-00266, \url{https://rscf.ru/en/project/22-11-00266/}.



\providecommand\href[2]{#2} \providecommand\url[1]{\href{#1}{#1}} \def\DOI#1{{\href{https://doi.org/#1}{https://doi.org/#1}}}\def\DOIURL#1#2{{\href{https://doi.org/#2}{https://doi.org/#1}}}

\end{document}